\newtheorem{thm}{Theorem}[section]
\newtheorem{lem}[thm]{Lemma}
\newtheorem{asm}[thm]{Assumption}
\newdefinition{defn}[thm]{Definition}
\newdefinition{defns}[thm]{Definitions}
\newcommand{\ud}{\mathrm{d}}
\newcommand{\cx}[1]{{\color{blue}Xialiang: {#1}}}
\def\P{{\mathbb P}}
\def\Q{{\mathbb Q}}
\def\E{{\mathbb E}}
\journal{Operations Research Letters}
\begin{document}

\begin{frontmatter}


\title{Distributionally Robust Optimization with Correlated Data from Vector Autoregressive Processes }


\author[UC]{Xialiang Dou} \author[UC,ANL]{Mihai Anitescu}

\address[UC]{University of Chicago, Department of Statistics}
\address[ANL]{Argonne National Laboratory, Mathematics and Computer Science Division}
\begin{abstract}

We present a distributionally robust formulation of a stochastic optimization problem for non-i.i.d vector autoregressive data. We use the Wasserstein distance to define robustness in the space of distributions and we show, using duality theory, that the problem is equivalent to a finite convex-concave saddle point problem. The performance of the method is demonstrated on both synthetic and real data.
\end{abstract}

\begin{keyword}
Wasserstein Distance \sep Distributionally Robust Optimization \sep Saddle Point Problem

\MSC 90C15 \sep 90C25 
\end{keyword}

\end{frontmatter}


\section{Introduction}
A common formulation of optimization under uncertainty is the following stochastic program: \cite{shapiro2009lectures}
$$ \min_{x\in\mathcal{D}}  \mathbb{E}_{\mathbf{y} \sim F} [h(x,\mathbf{y})].$$
 Here the decision variable $x$ has a convex feasible domain $\mathcal{D}$, $h$ is a convex function in $x$, and $y$ is data from an underlying  generating process. 
Much of the work in stochastic programming is carried out under the assumption that the distribution $F$ is known 
\cite{birge2011introduction,shapiro2009lectures}.
 In many problems, however, other than some basic properties, we do not have the exact description of $F$.  
Using the empirical distribution $F_n$ as a surrogate for $F$ would overfit the data, especially when we  have  very few samples. 
 
One way to overcome the uncertainty attached to the probability density $F$ itself is to investigate \textit{distributionally robust stochastic optimization} (DRSO). This problem is \[\min_{x \in \mathcal{D}} \max_{F \in \mathcal{U}} \mathbb{E}_{\mathbf{y} \sim F} [h(x, \mathbf{y})],\] where the distribution $F$ is from a set $\mathcal{U}$. Significant research  recently has been carried out concerning the choice of ambiguity set $\mathcal{U}$ 
by trying to balance out-of-sample performance and computational complexity. In \cite{delage2010distributionally}, the authors proposed to specify the ambiguity set by the first and one-sided second moment constraints in order to preserve the convexity of the formulation. As mentioned in \cite{gao2017distributionally}, however, the one-sided second moment constraint may have no effect on the problem.
Other approaches identify the ambiguity set by considering distributions that are close to the empirical distribution in an appropriate measure. Different metrics include the Kullback-Leibler divergence \cite{jiang2016data}, Burg entropy \cite{wang2016likelihood}, total variation \cite{sun2015convergence}, $\chi^2$-distance \cite{klabjan2013robust}, and more generally $\phi$-divergence \cite{ben2013robust}  \cite{bayraksan2015data}. A drawback of $\phi$-divergence formulations is that they may not be rich enough to capture distributions of interest \cite{gao2016distributionally}. 
Recent work has introduced DRSO formulations based on the Wasserstein distance \cite{esfahani2015data, gao2016distributionally}, which has both out-of-sample performance guarantees and computational efficient reformulations.

	Most of the cited references study the problem in a setting where the data consists of copies of independent and identically distributed (i.i.d) random variables. In many applications, however, particularly when the data is formed by sequential entries in a time series, the i.i.d. assumption is not realistic. In this work we study the case when we have times series data from a \textit{vector autoregression} (VAR) process
	\begin{equation}\label{eq:vartime}
	\mathbf{y}_{t+1} = A\mathbf{y}_t + \bm{\xi}_t \text{, }t =1,\dots,n-1 .
	\end{equation}
	Here $\xi_1,\dots,\xi_{n-1} \in \mathbb{R}^d$ are i.i.d random variables with a zero-mean residual. Realizations $y_1,\dots,y_n \in \mathbb{R}^d$ are our observations, and the conditional expectation satisfies \\ $\mathbb{E}_{t-1}[\mathbf{y}_t] =A\mathbf{y}_{t-1}$. For a more concise way to represent the model, let $Y_+ = [y_2,\dots,y_n] \in \mathbb{R}^{d \times (n-1)}$, $Y_- = [y_1,\dots,y_{n-1}] \in \mathbb{R}^{d \times (n-1)}$ and $E = [\xi_1,\dots,\xi_{n-1}] \in \mathbb{R}^{d \times (n-1)}$. Thus, we have 
	\begin{equation}\label{eq:varmatrix}
	Y_+ = AY_- +E.
	\end{equation}
	The VAR model is widespread. It occurs in econometrics \cite{sims1980macroeconomics}, control theory \cite{kumar2015stochastic}, and  recent brain image analysis \cite{friston2009causal}. The ubiquity of times series models  motivates us to generalize the DRSO to the VAR-dependent data setting.  
	
	Our contribution is to propose a DRSO formulation using Wasserstein distance techniques for VAR data and to prove that if the original problem has the needed convexity features, then our DRSO formulation is a finite-dimensional convex-concave saddle point problem.
		
\section{Model and Robust Formulation}
Suppose we have data $y_1,y_2,\cdots,y_n \in \mathbb{R}^d$ from a time series and we need to make a decision that will be affected by the next outcome $y_{n+1}$. We assume that the time series is generated by a vector autoregression process $\left( \mathrm{VAR}(1) \right)$, 
\[\mathbf{y}_{t+1} = \beta + A\mathbf{y}_{t} + \bm{\xi}_{t}.\]
Here $A$ is a fixed transition matrix, and the noise terms $\ \xi_t\in \Xi \subseteq \mathbb{R}^d, \text{ }t=1 ,\dots, n-1$ are i.i.d with zero mean. For notational simplicity, let $\tilde{\mathbf{y}}_t = [1,\mathbf{y}_t]^T$, $\tilde {\bm{\xi}}_t = [0,\bm{\xi}_t]^T$, and \[\tilde{A} = \left[ \begin{array}{ll} 1 & 0 \\ \beta & A \end{array} \right].\] Therefore, the data model becomes
\[\tilde{\mathbf{y}}_{t+1} =  \tilde{A}\tilde{\mathbf{y}}_{t} + \tilde{ \bm{\xi}}_t, \; t=1,2,\ldots, n-1. \] 
To simplify notation, we will use, without loss of generality, $\mathbf{y}_t$ and $A$ in the previous equation for the rest of the article;  in other words, 
we will refer to the algebraic formalism of \eqref{eq:vartime} and \eqref{eq:varmatrix}. 
Consider the stochastic programming problem
	\begin{eqnarray} \label{eq:SO}
	\begin{aligned}
		& \underset{x}{\text{min }}
		& & \E_{\mathbf{y}_{n+1} \sim F} [ h(x,\mathbf{y}_{n+1}) ] \\
		& \text{subject to} && x \in \mathcal{D}.	
	\end{aligned}    
	\end{eqnarray}
	Here $F$ is the true conditional distribution of $y_{t+1}$ given $y_t$ within the model. For problems with real data, both $A$ and $F$ need to be estimated from the data. We can build confidence intervals of $A$ and $F$ under common regularity assumptions about the noise term $\xi_t$, which lead to our robust formulation. We consider the DRSO problem with decision variable $x$ informed by incoming data from process \eqref{eq:vartime}:
\begin{eqnarray}\label{prob1}
	\begin{aligned}
& \underset{x}{\text{min }} \underset{A,F}{\text{max}}
& & \mathbb{E}_{\mathbf{\bm{\xi}}_{n} \sim F}[ h(x,Ay_n + \bm{\xi}_n) ] \\
& \text{subject to}
& & d_1(A,\hat A) \leq \varepsilon_2\\
&&& F \in \mathcal{U} \\
&&& x \in \mathcal{D}.	
	\end{aligned}    
\end{eqnarray}
Here $\hat A$ is a fixed matrix obtained by regression based on the matrix formulation 
\eqref{eq:varmatrix}:
\[ \hat A = \arg \min_A \|Y_+ - AY_- \|_F^2 .\]
Here $\| \cdot \|_F$ is the Frobenius norm of the matrix. 
Concerning the structural matrix $A$, we impose an estimation accuracy constraint on $\hat A$, whereby $A$ and $\hat A$ have to be relatively close. It is well known that the accuracy of the regression matrix $\hat A$ depends on the condition number of the design matrix $Y_-$ \cite{lai1982least,golub2012matrix}.
Later we will specify the choice of $\varepsilon_2$ such that, with high probability, the true matrix $A$ satisfies that constraint. For each choice of $A$, we get the residual $\hat \xi_i = y_{i+1} - Ay_i$, $i = 1, \dots, n-1$. Let $F_n$ be the empirical distribution of $\hat \xi_i$; that is, $F_n = \frac{1}{n-1} \sum_{t=1}^{n-1} \delta_{\hat \xi_i}(\xi)$. 
Note that $F_n$ depends on $A$, which is itself a variable in \eqref{prob1}, but for simplicity of notation we do not explicitly indicate that. 
The family of the distribution, $\mathcal{U}$, is specified by constraining the  distribution $F$ relative to the empirical distribution $F_n$ by means of a specially chosen distance function
\begin{eqnarray}\label{cons}
	\mathcal{U} = \left\{
		\begin{array}{l|l}
		 F &F(\mathbf{\xi} \in \Xi) = 1\\
		 &  d_w(F,F_n) \leq \varepsilon_1
		\end{array}
	\right\}.
\end{eqnarray}
The ambiguity set $\mathcal{U}$ is a subset of the distributions on the measurable space $(\mathbb{R}^{d},\mathcal{B})$, where $\mathcal{B}$ is the $\sigma$-algebra of the Borel sets. The first condition constrains the support of the distribution to a known set $\Xi$, and the second constraint regulates the behavior of the  noise term. In the following, we will assume this set to be bounded. The existence of a known set that contains the support of the distribution is a common assumption with other approaches \cite{delage2010distributionally,gao2016distributionally}, at least when aiming for results comparable to ours, as well as a reasonable approach for most physical and economical processes. 


\subsection*{Wasserstein Distance}
The quantity $d_w$ is the Wasserstein distance, which  can be defined as follows.
\begin{defn}\label{def1}
Let $\P, \Q$ be two distributions on a metric space $(X,d)$. The 2-Wasserstein distance can be defined by
\[ d_w(\P,\Q) = \inf_{\pi \in \Pi(\P,\Q)} \sqrt{ \int_{X\times X} d^2(x , x') \pi(\ud x,\ud x')}.\]
 Here $\Pi(\P,\Q)$ is the family of distributions on $X\times X$ with marginal distributions $\P$ and $\Q$ \cite{villani2008optimal}. 
\end{defn}
This definition can be viewed as finding an optimal transport between two distributions, while the cost of moving probability mass is encoded by the distance $d(x,x')$ on the metric space $X$. Although this definition appears daunting, the key observation, also used in \cite{esfahani2015data}, is that when $\Q$ is the empirical distribution $F_{n} = \frac{1}{n-1}\sum_{i=1}^{n-1} \delta_{\hat \xi_i}(\xi)$, we can compute $d_w(\cdot,\cdot)$ relatively easily since we can always break $\P$ down to the sum of $n-1$ conditional distributions $\P_i$. Subsequently, by utilizing duality, we will convert the resulting infinite-dimensional optimization problem \eqref{prob1} into a computable finite convex problem.

\section{Problem Formulation and Dual Representation}
We now formally state our DRSO version of \eqref{prob1}:
	\begin{eqnarray} 
	\begin{aligned} \label{eq:DRO}
		& \underset{x}{\text{min }} \underset{A,F}{\text{max}}
		& & \E_{\mathbf{\bm{\xi}}_{n} \sim F}h(x,\mathbf{y}_{n+1}) \\
		& \text{subject to}
		& & d^2_w(F,F_n) \leq \varepsilon_1 \\
		&&& A \in \Omega(\epsilon_2)\\
		&&& x \in \mathcal{D},
	\end{aligned}    
	\end{eqnarray}
	where $F_n = \frac{1}{n-1}\sum_{i=1}^{n-1} \delta_{\tilde \xi_i}(\xi)$ is the empirical distribution of $\tilde\xi_i = y_{i+1}-Ay_{i}$, $i = 1 ,\dots n-1$, and $\Omega$ defines the uncertainty set of the matrix $A$, 
	\begin{equation} \label{eq:Omega}
	\Omega(\epsilon_2) \stackrel{.}{=}\left\{A \in \mathbb{R}^{d \times d} \left| \left\| A_{i} - \hat A_{i}\right\| \leq \varepsilon_{2,i} \text{ , for $i \in [d]$}  \right. \right\}.	
	\end{equation}
	The second constraint in \eqref{eq:DRO} is the confidence interval of $A$ for which we can choose $\varepsilon_{2,i}$  based on regression analysis \cite{fox1997applied} (see also the end of \S \ref{radius}). 
	Specifically, we denote here and in the following by $A_i$ the $i$th column of matrix $A \in \mathbb{R}^{d \times d}$ and by $\hat{A}_i$ the $i$th column of  matrix $\hat{A}$.  
	\subsection*{Reformulation}
	Writing now expectations in integral form and recalling our specification of the choice of support $\xi$ in \eqref{cons} and of the objects in Definition 
	\ref{def1}, we have the following.
	\begin{equation} 
		\begin{aligned}
			& \underset{x \in \mathcal{D}}{\text{min }}~~\underset{A \in \Omega(\epsilon_2),F,\pi \in \Pi(F,F_n)}{\text{max }}
			& & \int_{\Xi}h(x,Ay_n+\xi)\,  \ud F(  \xi) \\
			& \text{subject to}	
			& & \int_{\Xi} \, \ud F(\xi) = 1\\
			&&& \int_{\Xi \times \Xi} \|\xi - \xi'\|^2 \ud \pi( \xi, \xi') \leq \varepsilon_1\\
		\end{aligned}
	\end{equation} 
	Here the second constraint is a rewrite of the Wasserstein distance constraint using Definition \ref{def1}. From the definition of $\Pi(F,F_n)$, the joint distribution $\pi \in \Pi(F,F_n)$ has marginal distributions $F$ and $F_n$. Since $F_n$ is the empirical distribution, by the rules of conditional  distributions we have that $\pi(\xi,\xi') = \frac{1}{n-1}\sum_{i=1}^{n-1} \pi(\xi|\xi'=\hat \xi_i) \delta_{\hat \xi_i}(\xi')$, where  $F_i\doteq \pi(\xi|\xi'=\hat \xi_i)$  is the conditional distribution of $\xi$ given that $\xi'$ takes the value $\hat \xi_i$. 
	Then we have $F = \sum_{i=1}^{n-1}\P(\xi' = \hat \xi_i)F_i = \frac{1}{n-1}\sum_{i=1}^{n-1}F_i$.  
	We note that, as a conditional distribution, $F_i$ is constrained at this stage only by having the same support as $F$. 
	Also note that since $\pi(F,F_n)$ can be used to define $F$ (as one of its marginals), we 
substitute $F$ as above and reformulate the optimization problem with the conditional probabilities as the variables (similar to
\cite{esfahani2015data} ).
We obtain
	 \begin{equation} \label{integral_formulation}
		\begin{aligned}
			& \underset{x \in \mathcal{D}}{\text{min }}~\underset{A \in \Omega(\epsilon_2),\left\{F_i\right\}_{i=1}^{i=n-1}}{\text{max }}
			& & \frac{1}{n-1}\sum_{i=1}^{n-1}\int_{\Xi}h(x,Ay_n+\xi)\, \ud F_i(\xi) \\
			& \text{subject to}	
			& & \int_{\Xi} \, \ud F_i(\xi) = 1,\; i=1,2,\ldots,n-1 \\
			&&& \frac{1}{n-1}\sum_{i=1}^{n-1} \int_{\Xi} \|\xi - \hat{\xi}_i' \|^2 \ud F_i(\xi) \leq \varepsilon_1.\\
		\end{aligned}
	\end{equation} 
	Now, we reduce the above distributional optimization problem into a finite-dimensional problem.
\begin{thm}	\label{t:main}
Let $\Phi(x,A)$ denote the solution of the inner maximizing problem with fixed $x$ and $A$ in \eqref{integral_formulation}. When $h(x,y)$ is differentiable, convex in the first argument and concave in the second argument,  we have the following identity: 
	 \begin{equation*} 
		\begin{aligned}
			 \Phi(x,A)=\underset{u \geq 0}{\min }  & ~u\varepsilon_1   + \underset{\xi_i \in \Xi, i \in [n-1]}{\max } \Bigg\{ \frac{1}{n-1}\sum_{i=1}^{n-1} \Big[ h(x,Ay_n + \xi_i) -  \\
			 &u\cdot\ \|\xi_i - (y_{i+1}-Ay_i)\|^2 \Big]  \Bigg\}.\\
		\end{aligned}
	\end{equation*} 
\end{thm}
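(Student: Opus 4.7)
The plan is to recognize the inner problem as a constrained moment problem, apply Lagrangian duality to the Wasserstein constraint, swap the order of min and max via a strong-duality theorem, and finally collapse each distributional sub-problem to a pointwise maximum on $\Xi$.

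First, I would attach a single multiplier $u\ge 0$ to the Wasserstein constraint
\[
\frac{1}{n-1}\sum_{i=1}^{n-1}\int_\Xi \|\xi-\hat\xi_i\|^2\,dF_i(\xi) \le \varepsilon_1,
\]
while keeping the normalization constraints $\int_\Xi dF_i(\xi)=1$ as hard restrictions on the feasible set of conditional measures. Writing $\hat\xi_i = y_{i+1}-Ay_i$, the Lagrangian reformulation gives
\[
\Phi(x,A) = \max_{\{F_i\}}\inf_{u\ge 0}\Bigl\{u\varepsilon_1 + \tfrac{1}{n-1}\sum_{i=1}^{n-1}\int_\Xi\bigl[h(x,Ay_n+\xi) - u\|\xi-\hat\xi_i\|^2\bigr]\,dF_i(\xi)\Bigr\}.
\]

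Next I would exchange the outer $\max_{\{F_i\}}$ with the $\inf_{u\ge 0}$. This is the strong-duality step for a moment problem: I would invoke \cite[Proposition 3.4]{shapiro2001duality}, in the same spirit as \cite[(12a)]{esfahani2015data}. The required Slater-type condition is easy to verify, since choosing $F_i=\delta_{\hat\xi_i}$ yields $d_w^2(F,F_n)=0<\varepsilon_1$, placing $F_n$ strictly in the interior of the Wasserstein ball. With the order exchanged, the inner maximization decouples across $i$, and for each fixed $u$ we are left with maximizing a linear functional $F_i\mapsto\int_\Xi g_i(\xi)\,dF_i(\xi)$, where $g_i(\xi)\doteq h(x,Ay_n+\xi)-u\|\xi-\hat\xi_i\|^2$, over probability measures supported on the bounded set $\Xi$. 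Since $g_i$ is continuous in $\xi$ and $\Xi$ is compact, the supremum equals the pointwise supremum $\sup_{\xi_i\in\Xi}g_i(\xi_i)$, achieved by a Dirac mass at a maximizer; reassembling the sum with the $u\varepsilon_1$ term produces the claimed identity.

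The main obstacle will be the strong-duality step: one must check that the hypotheses of \cite{shapiro2001duality} apply in the infinite-dimensional measure setting here (a Slater point, appropriate semicontinuity of the integrand, and a suitable topology on the ambient space of measures) so that no duality gap is introduced. Continuity of $h$ together with the strong coercive penalty $-u\|\xi-\hat\xi_i\|^2$ handles the semicontinuity and attainment issues, while the Slater point $F_n$ itself resolves constraint qualification. The convex--concave assumption on $h$ is not actually needed for this identity; it will only become relevant later when the outer $\min$--$\max$ is analyzed as a saddle-point problem.
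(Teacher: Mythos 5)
Your proposal follows essentially the same route as the paper's proof: Lagrangian dualization of the Wasserstein constraint, a min--max exchange justified by the strong duality result of \cite{shapiro2001duality} in the spirit of \cite{esfahani2015data}, and collapsing each linear functional over probability measures to a pointwise supremum on $\Xi$ via Dirac masses. Your added remarks on the Slater point $F_n$ and on the convex--concave assumption being unnecessary for this particular identity are correct and slightly more careful than the paper's own argument, but they do not change the approach.
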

	\begin{proof}
	From Lagrangian duality,  we get that 	$\Phi(x,A)$ equals
		\begin{equation*}
			\begin{aligned}		
		   \max_{F_i, i \in [n-1]} \inf_{u \geq 0} u\left[ \varepsilon_1 -\frac{1}{n-1}\sum_{i=1}^{n-1} \int_{\Xi} \|\xi_i - \tilde\xi'_i\|^2 F_i( \ud  \xi) \right] \\
			\frac{1}{n-1}+\sum_{i=1}^{n-1}\int_{\Xi} h(x, Ay_n+\xi) \ud F_i(\xi)
			\end{aligned}
		\end{equation*}
		\begin{eqnarray*}
			&=&  \inf_{u\geq 0} u\varepsilon_1 + \ \max_{ F_i, i \in [n-1]} \Bigg\{ \frac{1}{n-1}\sum_{i=1}^{n-1} \int_{\Xi} h(x,Ay_n+\xi_i)\\
			&&  -u\| \xi_i - \tilde \xi_i \|^2 F_i(\ud \xi_i)\Bigg\} 
		\end{eqnarray*}
	\begin{eqnarray*}
			&=&  \inf_{u\geq 0} u\varepsilon_1  + \ \max_{\xi_i \in \Xi, i \in [n-1]} \Bigg\{ \frac{1}{n-1}\sum_{i=1}^{n-1} \left[ h(x,Ay_n+\xi_i) \right. \\
			&&  \left. - u\|\xi_i - (y_{i+1} - Ay_i)\|^2 \right] \Bigg\}.
		\end{eqnarray*}
		
	The second equality occurs from exchanging min and max, which is valid by strong duality. This can be proved by an extended version of a well-known strong duality result for moment problems \cite{shapiro2001duality}, similar to the argument in \cite[Theorem 4.2]{esfahani2015data}.
	 The third equality stems from the fact that the maximum over distributions $F_i$ with respect to the integral is equal to the maximum point of the integrand.
	\end{proof}
	 From Theorem \ref{t:main} our DRSO formulation \eqref{eq:DRO} is equivalent to 
	\begin{equation} \label{eq:DRSO}
			\begin{aligned} 
				&  \inf_{x\in\mathcal{D}} \max_{A \in \Omega(\epsilon_2)} \inf_{u\geq 0} \max_{\xi_i\in\Xi, i \in [n-1]} u\varepsilon_1 + \\
				& \Bigg\{ \frac{1}{n-1}\sum_{i=1}^{n-1} \left[ h(x,Ay_n+\xi_i) - u\|\xi_i - (y_{i+1} - Ay_i)\|^2 \right] \Bigg\}.
			\end{aligned}
	\end{equation}
We can now state our main result.
\begin{thm}\label{t:main1}
 Problems \eqref{eq:DRO} and \eqref{eq:DRSO} are equivalent to the convex-concave minimax problem:
 		\begin{align}
 \inf_{x\in\mathcal{D}}\max_{A,\xi_i\in\Xi, i \in [n-1]} \, &\frac{1}{n-1}\sum_{i=1}^{n-1} h(x,Ay_n+\xi_i) \label{eq:mainProb} \\
 \text{s.t. } &\frac{1}{n-1}\sum_{i=1}^{n-1} \|\xi_i-(y_{i+1}-Ay_i)\| \leq \varepsilon_1, \nonumber \\
 &  \left\| A_{i} - \hat A_{i}\right\| \leq \varepsilon_{2,i} \text{, for $i \in [d]$}. \nonumber
 \end{align} 
\end{thm}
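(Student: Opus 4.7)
The plan is to run the Lagrangian argument of Theorem \ref{t:main} in reverse. Starting from the dual form \eqref{eq:DRSO}, I recognize the innermost $\inf_{u\ge 0}\max_{\xi_i\in\Xi}$ block as the Lagrangian dual value of a constrained maximization over the $\xi_i$; pulling the constraint back out, merging it with $\max_{A\in\Omega(\epsilon_2)}$ into a joint maximization, and then verifying convexity in $x$ together with joint concavity in $(A,\{\xi_i\})$ yields \eqref{eq:mainProb}.

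For the duality step, fix $x\in\mathcal{D}$ and $A\in\Omega(\epsilon_2)$ and consider
\begin{equation*}
\inf_{u\ge 0}\max_{\xi_i\in\Xi}\Bigl\{u\varepsilon_1+\tfrac{1}{n-1}\sum_{i=1}^{n-1}\bigl[h(x,Ay_n+\xi_i)-u\|\xi_i-(y_{i+1}-Ay_i)\|^2\bigr]\Bigr\}.
\end{equation*}
This is exactly the Lagrangian dual, with multiplier $u\ge 0$, of
\begin{equation*}
\max_{\xi_i\in\Xi}\ \tfrac{1}{n-1}\sum_{i=1}^{n-1}h(x,Ay_n+\xi_i)\ \ \text{s.t.}\ \ \tfrac{1}{n-1}\sum_{i=1}^{n-1}\|\xi_i-(y_{i+1}-Ay_i)\|^2\le\varepsilon_1.
\end{equation*}
Strong duality applies because the objective is concave in $(\xi_1,\dots,\xi_{n-1})$ (composition of the concave $h(x,\cdot)$ with an affine map in each $\xi_i$), the feasible set is the intersection of $\Xi^{n-1}$ with a sublevel set of a convex quadratic and hence convex, and Slater's condition is met by the interior point $\xi_i=y_{i+1}-Ay_i$ which attains constraint value $0<\varepsilon_1$ (provided each $y_{i+1}-Ay_i$ lies in $\Xi$, as is natural for the noise support). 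This is the same moment-duality argument invoked in the proof of Theorem \ref{t:main}.

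Substituting the constrained form back into \eqref{eq:DRSO} and absorbing the outer $\max_{A\in\Omega(\epsilon_2)}$ into a single joint maximization with $\max_{\xi_i}$ produces the minimax \eqref{eq:mainProb}. The convex-concave structure is then routine to check: the outer minimization is over the convex set $\mathcal{D}$ with $x\mapsto h(x,Ay_n+\xi_i)$ convex by assumption; the inner objective is jointly concave in $(A,\xi_1,\dots,\xi_{n-1})$ since $(A,\xi_i)\mapsto Ay_n+\xi_i$ is affine and $h$ is concave in its second argument; the Wasserstein constraint is a sublevel set of the convex quadratic $(A,\{\xi_i\})\mapsto\tfrac{1}{n-1}\sum_i\|\xi_i-y_{i+1}+Ay_i\|^2$; and $\Omega(\epsilon_2)$ is a product of Euclidean balls.

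The main obstacle is securing strong duality for the inner problem uniformly over $(x,A)$: one must check that $\Xi$ is large enough for the Slater point $\xi_i=y_{i+1}-Ay_i$ to be admissible for every $A\in\Omega(\epsilon_2)$, and that compactness of $\Xi$ suffices to attain the inner maximum and invoke the moment-problem duality of \cite{shapiro2001duality} in the spirit of \cite{esfahani2015data}. A subsidiary bookkeeping point is that the norm in the constraint of \eqref{eq:mainProb} should be squared in order to agree with the $2$-Wasserstein distance used in \eqref{eq:DRO}.
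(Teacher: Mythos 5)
Your argument is correct, but it reaches \eqref{eq:mainProb} by a genuinely different mechanism than the paper. The paper never dualizes the inner problem back at fixed $(x,A)$; instead it defines $\Psi(x,u,A)=\max_{\xi_i\in\Xi}\{\cdot\}$, notes that $\Psi$ is convex in $u$ (a max of affine functions) and concave in $A$ (a max of jointly concave functions), and applies Sion's minimax theorem \cite[Thm.3.4]{sion1958general} twice --- first to swap $\max_{A\in\Omega(\epsilon_2)}$ with $\inf_{u\geq 0}$, then to swap $\inf_{u\geq 0}$ with the joint $\max_{A,\xi_i}$ --- so that the innermost problem becomes $\inf_{u\geq 0}$ of a function \emph{linear} in $u$, whose exact evaluation (finite value when $\frac{1}{n-1}\sum_i\|\xi_i-(y_{i+1}-Ay_i)\|^2\leq\varepsilon_1$, $-\infty$ otherwise) reinstates the Wasserstein constraint with no Slater hypothesis at that stage; the price is that it needs joint concavity in $(A,\{\xi_i\})$ and compactness of $\Omega(\epsilon_2)$ and $\Xi$ to license the two swaps. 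Your route instead fixes $(x,A)$, recognizes $\inf_{u\ge0}\max_{\xi_i}$ as the Lagrangian dual of the constrained concave maximization over the $\xi_i$, and invokes finite-dimensional strong duality via Slater's condition, after which merging with $\max_{A}$ is immediate; this avoids Sion entirely and is more elementary, but it transfers the burden to verifying a Slater point uniformly over $A\in\Omega(\epsilon_2)$ --- you correctly flag that $\xi_i=y_{i+1}-Ay_i$ need not lie in $\Xi$ for perturbed $A$, and a cleaner sufficient condition is simply that for each such $A$ some $\xi\in\Xi^{n-1}$ attains constraint value strictly below $\varepsilon_1$. Your observation that the constraint in \eqref{eq:mainProb} should carry a squared norm to match the $2$-Wasserstein cost in \eqref{eq:DRO} is accurate; the paper's displayed constraint drops the square, while its own derivation produces the squared version.
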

\begin{proof}
		\begin{align}
			&\Psi(x,u,A) \stackrel{.}{=} \max_{\xi_i \in \Xi, i \in [n-1]}  u\varepsilon_1 + \label{eq:equiv}\\
			&\Bigg\{ \frac{1}{n-1}\sum_{i=1}^{n-1} \left[ h(x,Ay_n+\xi_i) - u\|\xi_i - (y_{i+1} - Ay_i)\|^2 \right] \Bigg\} \nonumber
		\end{align}
		is both a maximum of affine functions in $u$ and  a maximum of functions \textit{jointly concave} in $(A,\{\xi_i\})$. Therefore, it is convex in $u$ and concave in $A$. Since the feasible set of $A$ is bounded, by Sion's minimax theorem \cite[Thm.3.4]{sion1958general}, Equation \eqref{eq:DRSO}  becomes
		\begin{align*}
			 &\inf_{x\in\mathcal{D}} \max_{A \in \Omega(\epsilon_2)} \inf_{u\geq 0}\Psi(x,u,A)  \\
			  & \stackrel{\mbox{\cite[Thm.3.4]{sion1958general}}}{=} \inf_{x\in\mathcal{D}}~~\inf_{u\geq 0}\max_{A \in \Omega(\epsilon_2)} \Psi(x,u,A) \\
			  &=  \inf_{x\in\mathcal{D}}~~\inf_{u\geq 0}~~\max_{A \in \Omega(\epsilon_2),\xi_i\in\Xi, i\in[n-1]}\\
			  &u\varepsilon_1+ \frac{1}{n-1}\sum_{i=1}^{n-1} \left[ h(x,Ay_n+\xi_i) - u\|\xi_i - (y_{i+1} - Ay_i)\|^2 \right] \\
			  & \stackrel{\mbox{\cite[Thm.3.4]{sion1958general}}}{=} \inf_{x\in\mathcal{D}}~~\max_{A \in \Omega(\epsilon_2),\xi_i\in\Xi, i\in [n-1] }~~\inf_{u\geq 0}\\
			  &u\varepsilon_1+ \frac{1}{n-1}\sum_{i=1}^{n-1} \left[ h(x,Ay_n+\xi_i) - u\|\xi_i - (y_{i+1} - Ay_i)\|^2 \right]. 
		\end{align*}
		By strong duality applied to the innermost problem, the conclusion follows, after unfolding the definition of $\Omega(\epsilon_2)$ \eqref{eq:Omega}.
\end{proof}
The important consequence of Theorem \ref{t:main1} is that \eqref{eq:DRO} can be solved efficiently by solving the equivalent problem 
\eqref{eq:mainProb}  with techniques such as those in \cite{nemirovski2004prox}.
		
\section{Concentration Inequalities}\label{radius}
We now aim to connect the relaxation parameters $\varepsilon_1$ and $\varepsilon_2$ to the probability of the true probability distribution satisfying the relaxed constraints. We assume that a bound for the support is known, similar to \cite{delage2010distributionally}.
\begin{asm}\label{asm1}
There exists an $R >0$ such that for the noise term $\xi$, we have
$ \|\mathbf{\xi} \|\leq R.$
\end{asm}
We note that the boundedness assumption can be relaxed by requiring square-exponential integrability. This would require techniques for unbounded distributions that involve the Wasserstein distance concentration, as presented in \cite{esfahani2015data}, and consistency of the transition matrix estimation ($A$) (see, e.g.,  \cite{brockwell2016introduction}). For brevity we will focus on the bounded support case only. 
\begin{lem}\label{e_1}
\emph{(Wasserstein metric concentration, specification of $\varepsilon_1$)\\}
Suppose $\xi_1,\xi_2,\cdots,\xi_n \in \mathbb{R}^d$ are i.i.d samples from a distribution $F$  with zero mean and that satisfy Assumption \ref{asm1}. Then, for the empirical distribution $F_n$, the following inequality holds:
\begin{equation} 
\label{eq:wassConc} \P( d_w(F,F_n) \geq \varepsilon) \leq C_0\exp\left(- C_1 N \varepsilon^2\right).
\end{equation}
Here $C_0,C_1$ depend only on $R$ and $d$.
\end{lem}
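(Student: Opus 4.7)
The plan is to deduce the concentration inequality from the classical theory of empirical-measure concentration in Wasserstein distance, in the form developed by Bolley, Guillin, and Villani and refined by Fournier and Guillin. Under Assumption \ref{asm1} the distribution $F$ is supported in a Euclidean ball of radius $R$, so all the technical hypotheses of these concentration theorems---finite exponential moments, and a Talagrand-type transportation inequality on the support---are satisfied with constants depending only on $R$ and $d$.

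My approach would be the standard two-step decomposition. First, I would bound the expectation $\E[d_w(F, F_n)]$ via a covering-number or entropy-integral argument on the bounded support, yielding $\E[d_w(F, F_n)] \leq \kappa(R,d)\, N^{-\alpha_d}$ for some dimension-dependent exponent $\alpha_d>0$. Second, I would establish Gaussian concentration of $d_w(F, F_n)$ about its mean. Viewed as a function of the i.i.d.\ sample $(\xi_1,\ldots,\xi_N)$, $d_w(F, F_n)$ is Lipschitz with respect to replacement of any single coordinate by the triangle inequality for $d_w$. Because $F$ has diameter at most $2R$ it satisfies a $T_1$ transportation inequality with constant $O(R)$, and a standard Herbst/tensorisation argument on the i.i.d.\ sample then yields
\[
\P\!\left( d_w(F, F_n) - \E[d_w(F, F_n)] \geq t \right) \leq \exp\!\left( - C_1' N t^2 / R^2 \right).
\]
Setting $t=\varepsilon/2$ in this concentration bound and combining it with the expectation estimate delivers \eqref{eq:wassConc}; in the small-$\varepsilon$ regime $\varepsilon \lesssim \kappa(R,d) N^{-\alpha_d}$ where the expectation is not yet dominated by $\varepsilon/2$, the leading constant $C_0$ is enlarged to make the stated inequality hold trivially, which only affects the $R$- and $d$-dependence absorbed into $C_0$.

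The main obstacle is handling the dimension dependence cleanly: in $\mathbb{R}^d$ with $d \geq 3$ the natural expectation rate $N^{-1/d}$ degrades with $d$, and a naive McDiarmid bound gives only Lipschitz-constant $O(R/\sqrt{N})$ per coordinate, whose direct application does not recover the $N\varepsilon^2$ factor in the exponent. Bypassing this requires the transport-entropy route above, or alternatively a direct invocation of Fournier--Guillin (Theorem 2 therein), which packages both the expectation bound and the concentration inequality under the bounded-support specialization and yields exactly the form of \eqref{eq:wassConc} with constants $C_0$, $C_1$ explicit functions of $R$ and $d$.
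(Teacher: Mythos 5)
Your fallback option is in fact the paper's entire proof: the authors dispose of Lemma \ref{e_1} in one line by citing the concentration theorem of Esfahani and Kuhn (their Theorem 3.4, itself a repackaging of Fournier--Guillin), using Assumption \ref{asm1} only to verify the exponential-moment hypothesis with $a=2$. Your primary, self-contained two-step route is genuinely different, but its concentration step does not close for the metric actually used here. Definition \ref{def1} is the \emph{2}-Wasserstein distance, and, as you yourself observe, the per-coordinate oscillation of $d_w(F,F_n)$ is only $O(R/\sqrt{N})$ (replacing one atom moves mass $1/N$ a distance at most $2R$, which perturbs $W_2$ by up to $2R/\sqrt{N}$), so McDiarmid loses the factor of $N$ in the exponent. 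The transport-entropy substitute you propose rescues this only for $W_1$: bounded support yields a $T_1$ inequality (Bobkov--G\"otze) whose tensorization controls functions that are Lipschitz for the $\ell^1$-sum metric on the sample, and $W_1(F,\cdot)$ is such a function with constant $2R/N$, but $W_2(F,\cdot)$ is merely H\"older-$1/2$ in that metric. Dimension-free Gaussian concentration for $W_2$-type functionals requires a $T_2$/log-Sobolev-type hypothesis, which boundedness alone does not supply. So the step as written proves the lemma for $W_1$, not for the $d_w$ of this paper.

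The second, more fundamental gap is the patch for the small-$\varepsilon$ regime. You propose to enlarge $C_0$ so the bound ``holds trivially'' when $\varepsilon\lesssim \E[d_w(F,F_n)]\asymp N^{-\alpha_d}$, but $C_0$ must be independent of $N$: for $d\ge 3$ and $\varepsilon\asymp N^{-1/d}$ the right-hand side $C_0\exp\left(-C_1N\varepsilon^2\right)=C_0\exp\left(-C_1N^{1-2/d}\right)$ tends to zero as $N\to\infty$, while the left-hand side is the probability that $d_w$ exceeds a fixed fraction of its own mean, which stays bounded away from zero. This is precisely why the Fournier--Guillin/Esfahani--Kuhn bound carries the exponent $\varepsilon^{\max\{d,2\}}$ for $\varepsilon\le 1$ rather than $\varepsilon^2$; no $N$-independent choice of $C_0,C_1$ can repair it. To be fair, the paper's one-line derivation quietly inherits the same discrepancy (the cited theorem does not give an $\varepsilon^2$ exponent for small $\varepsilon$ when $d\ge 3$), so the obstacle you flagged is real and intrinsic to the statement --- but your proposed fix does not overcome it, and you should either restrict to the regime $\varepsilon\ge 1$ (where $a=2$ gives the stated form), restrict to $d\le 2$, or weaken the exponent.
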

\begin{proof}
The result is an immediate consequence of \cite[Theorem 3.4]{esfahani2015data}, where we chose $a=2$ and used Assumption \ref{asm1} for bounding $A$ from that statement. 
\end{proof}
We also note that $C_0$, $C_1$ are explicitly computable by using techniques such as in \cite[Appendix B]{gao2016distributionally}. Now, we can select the right-hand side of \eqref{eq:wassConc} to the confidence level, for example, $0.05$. This will be a conservative estimate, however, and we will use cross-validation in practice to compute a suitable $\epsilon_1$, as we will discuss in \S \ref{s:numerical}. The important feature of Lemma \ref{e_1}, however, is the exponential decay of the failure probability with $N$ and $\epsilon^2$. 
\begin{lem}\label{e_2}
\emph{(Concentration of bounded random vectors)\\}
Suppose $\xi_1,\xi_2,\cdots,\xi_n$ are i.i.d samples from a distribution  with zero mean that satisfies  Assumption \ref{asm1}. Then the following holds with probability at least $1-\delta$:
\[ \left\|\sum_{i=1}^n c_i\xi_i \right\| \leq R \sqrt{\sum_{i=1}^nc_i^2}\cdot\left(4\sqrt{d} + 2\sqrt{2\log(1/\delta)}\right). \] 
\end{lem}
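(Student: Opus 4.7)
The plan is to reduce the vector-valued deviation $\|Z\|$ (with $Z = \sum_{i=1}^{n} c_i \xi_i$) to a uniform scalar deviation via a covering argument on the unit sphere. Starting from the dual characterization $\|Z\| = \sup_{v \in S^{d-1}}\langle v, Z\rangle$, the task becomes controlling a supremum of Hoeffding-type scalar sums indexed by directions on the sphere; this is the natural route because a per-coordinate Hoeffding is too loose (it produces $\log d$ inside a square root) whereas a vector-valued Hilbert-space Hoeffding does not naturally yield the $\sqrt{d}$ term the statement advertises.

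First I would pick a $(1/2)$-net $\mathcal{N}$ of $S^{d-1}$ with $|\mathcal{N}| \leq 5^d$ (standard volume bound). The elementary sphere-net estimate $\|Z\| \leq 2 \max_{v \in \mathcal{N}} \langle v, Z\rangle$ replaces the supremum by a maximum over a finite set. For any fixed $v \in S^{d-1}$, the scalar $\langle v, Z\rangle = \sum_i c_i \langle v, \xi_i\rangle$ is a sum of independent zero-mean bounded summands with $|\langle v, \xi_i\rangle| \leq R$ by Cauchy--Schwarz and Assumption~\ref{asm1}. Hoeffding's inequality then gives
\[ \mathbb{P}\!\left(\langle v, Z\rangle \geq t\right) \leq \exp\!\left(-\frac{t^2}{2R^2 \sum_{i} c_i^2}\right). \]
A union bound over $\mathcal{N}$ multiplies this tail by $5^d$, and solving $5^d \exp(-t^2/(2R^2\sum_i c_i^2)) = \delta$ for $t$ yields $t = R\sqrt{2 \sum_i c_i^2}\,\sqrt{d\log 5 + \log(1/\delta)}$.

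Finally, the elementary inequality $\sqrt{a+b} \leq \sqrt{a} + \sqrt{b}$ splits $t$ into a dimension term and a confidence term, and multiplying by the factor $2$ from the sphere-net step produces $\|Z\| \leq R\sqrt{\sum_i c_i^2}\,\bigl(2\sqrt{2d\log 5} + 2\sqrt{2\log(1/\delta)}\bigr)$. The numerical check $2\sqrt{2\log 5} \approx 3.59 < 4$ then absorbs the dimension constant into the stated $4\sqrt{d}$, yielding exactly the claimed bound. The only mild obstacle is this cosmetic constant rounding: the $4$ is not tight but is the smallest integer that subsumes $2\sqrt{2\log 5}$, and this numerical check should be spelled out explicitly so the reader can see the bound is valid for every $d \geq 1$. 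Conceptually, no machinery beyond an $\varepsilon$-net combined with scalar Hoeffding is required.
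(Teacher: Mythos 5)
Your argument is correct, and at its mathematical core it coincides with the paper's: the paper first upgrades the boundedness assumption to sub-Gaussianity of each $\xi_i$ via Hoeffding's lemma applied to $s^T\xi_i$, concludes that $\sum_i c_i\xi_i$ is sub-Gaussian with variance proxy $\sigma = R\sqrt{\sum_i c_i^2}$, and then invokes a cited norm-concentration theorem for sub-Gaussian vectors (Theorem 1.19 of Rigollet's lecture notes) to obtain exactly the $\sigma\bigl(4\sqrt{d}+2\sqrt{2\log(1/\delta)}\bigr)$ bound. What you have done is reprove that cited theorem from scratch: the $\varepsilon$-net reduction $\|Z\|\leq 2\max_{v\in\mathcal{N}}\langle v,Z\rangle$, the scalar tail bound in each direction, and the union bound are precisely the mechanism hidden inside the citation, and your constant bookkeeping ($5^d$ net, $2\sqrt{2\log 5}<4$) lands on the same numerical constants. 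The only genuine difference of route is that you use Hoeffding's inequality directly on the bounded scalars $c_i\langle v,\xi_i\rangle$, while the paper passes through the moment-generating-function characterization before applying a Chernoff bound; the exponents agree. Your version buys self-containedness and makes the origin of the (non-tight) constant $4$ transparent; the paper's version is shorter and isolates the sub-Gaussian variance proxy $\sigma$, which it then reuses directly in the specification of $\varepsilon_{2,i}$. No gap.
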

\begin{proof}
By Assumption \ref{asm1}, each $\xi_i$ is bounded with zero mean. Applying Hoeffding's lemma \cite[Lemma 1.8]{rigollet201518} to $s^T \xi_i$, we have that  $\xi_i$ are also sub-Gaussian random vectors with variance proxy $R$ \cite[Definition 1.2]{rigollet201518}. In other words,
\[\mathbb{E}e^{\lambda s^T\xi_i} \leq \exp\left(\frac{\lambda^2R^2}{2}\right),\]
for any $\|s\|\leq 1$. From the assumption of independence of $\xi_i$, $i=1,2,\ldots,n-1$, we also have 
\[\mathbb{E}e^{\lambda s^T(\sum_{i=1}^n c_i\xi_i)} \leq \exp\left(\frac{\lambda^2R^2\sum_{i=1}^n c_i^2}{2}\right),\]
for any $\|s\| \leq 1.$ Therefore $\sum_i^n c_i \xi_i$ is a sub-Gaussian random vector with variance proxy $\sigma \doteq R\sqrt{\sum_i^n c_i^2}$.
From \cite[Theorem 1.19]{rigollet201518}, we have with probability greater than $1-\delta$ that 
$\left\|\sum_{i=1}^{n} c_i\xi_i\right\| \leq \sigma(4\sqrt{d} + 2\sqrt{2\log(1/\delta)}).$
\end{proof}


\textbf{Specification of $\varepsilon_{2,i}$}
With this concentration bound, we can now specify the choice of $\varepsilon_{2,i}$. Our model is $Y_+ = A Y_- + E$. 
From the normal equations, we know that the regression matrix estimate is $\hat A = Y_+C$, where $C = Y_-^\dagger$. Here we assume $Y_-Y_-^T$ is invertible. From our model, this results in $\hat A = A + EC$. 
Applying Lemma \ref{e_2} 
to each column of $\hat A$ for confidence level $1-\frac{\delta}{d}$, and using Boole's inequality to the complements, 
we have, with probability greater than $1-\delta=1-d \frac{\delta}{d}$, that $\|A_i - \hat A_i\| \leq \varepsilon_{2,i}$ with 
\[ \varepsilon_{2,i} \leq \sigma_i(4\sqrt{d} + 2\sqrt{2\log(d/\delta)}), \; \sigma_i = R \sqrt{ \sum_{j=1}^{n-1} c_{ji}^2},\; i \in [d].\]
We also note, using regression theory \cite{Vershynin:2010aa}, that the 2-norm of $C$, and thus $\sigma_i$, $i=1,2,\ldots, d$ decays as $O(\frac{1}{\sqrt{n}})$.

\section{Experiments}\label{s:numerical}	
We apply the DRSO approach \eqref{eq:DRO} in the variant outlined in Theorem \ref{t:main1} to a portfolio optimization problem. 
	 The decision variable $x$ is constrained to the $(d-1)$-dimensional standard simplex $\mathcal{D} = \{x \in \mathbb{R}^d | x_1 + \dots+x_d = 1\text{, }x_i\geq 0\text{, }i  =1,\dots,d  \}$. The variable $x$ represents the portions of investment in different stocks. Here the data $y_t \in \mathbb{R}^d$ is the price of $d$ different stocks at time $t$. In the framework of \eqref{eq:SO}, the objective function is the (negative) return
	 	 	\[ h(x,y)= -\langle x,y \rangle. \]
	We subsequently solve the distributionally robust problem \eqref{eq:mainProb} that is derived from our main result, Theorem \ref{t:main1}, with the convex, spherical $\Xi$ from Assumption \ref{asm1}. We report on those results in the rest of this section and label them as "DRO." 

The DRSO problem \eqref{eq:mainProb} was solved with 
 the saddle point algorithm from \cite{nemirovski2004prox} implemented in Julia and 
run on a MacBook Pro, 2.4 GHz Intel Core i5, 8 GB 1600 MHz DDR3.  
The computation time of $100$ experiments for  either synthetic or real data cases below for $n=21$, $d = 8$ (20 time periods) did not exceed $300$ seconds.

\subsection{Synthetic Data}

	For our experiment with synthetic data, the feasible set $\mathcal{D}$ is the $(d-1)$-dimensional standard simplex, and we set $d = 8$. The objective function is the inner product $-\langle x,y\rangle$. Here $y_i$ is from the VAR(1) times series, with the transition matrix entrywise drawn from uniform distribution over $[0,1]$, then scaled so that $\|A\| = 0.8$ and $\xi_t$ is from $N(0,R^2I)$, then truncated to $2$-norm no greater than a preset radius $R$.	The metric we use in the Wasserstein distance constraint is the $2$-norm in Euclidian space $(\mathbb{R}^d, \|\cdot \|)$. The radii of confidence intervals from \S \ref{radius} are conservative. 
	For better performance we shrink the parameters $\epsilon_1, \epsilon_2$ by factors $1, 0.5$ on the first $40$ data points and choose the combination with the best outcome. In \cite{esfahani2015data}, the authors tried different confidence levels $\delta$, which fundamentally resulted in the same effect.
	We compare the solution of DRO $x^d$ and the solution of sample average approximation (SAA) $x^s$ with the empirical residuals. 
	Here our SAA solution is obtained by 
	 \begin{equation} \label{saa_formulation}
		\begin{aligned}
			\min_{x} \max_{A} &\sum_{i=1}^{n-1}h(x,Ay_n +\hat{\xi}_i )\\
			\text{s.t.}& \|A_i-\hat A_i\| \leq \varepsilon_{2,i}\text{, }i\in[d].
		\end{aligned}
	\end{equation} 
	We also calculate the solution of the deterministic version of \eqref{eq:DRO} obtained by plugging in the maximum likelihood estimator (MLE) of $y_{n}$, $\hat{A} y_{n-1}$.
	Let $x^*=\arg\min h(x,y_{n+1})$  (solution with perfect information). We report the "regret" $h(x,y_{n+1})-h(x^*,y_{n+1})$ for $x$ given by the different approaches. Some empirical quantiles are given in Table \ref{fig:linear_exp}, and two histograms are given in Figures \ref{fig:linear_1} and \ref{fig:linear_2}. As we can see from the results, with more training samples or lower noise level, the estimated transition matrix $\hat A$ becomes more accurate, so regression results in a decision closer to the perfect one most of the time. In all scenarios, however, DRSO has a lighter tail (see also Figures \ref{fig:linear_1} and \ref{fig:linear_2}), which demonstrates the robustness of our decision. In particular, in Table \ref{fig:linear_exp}, DRSO exhibits the smallest regret for all experiments at the 90th quantile. 

\begin{figure}
	\centering\includegraphics[width = 2.8in]{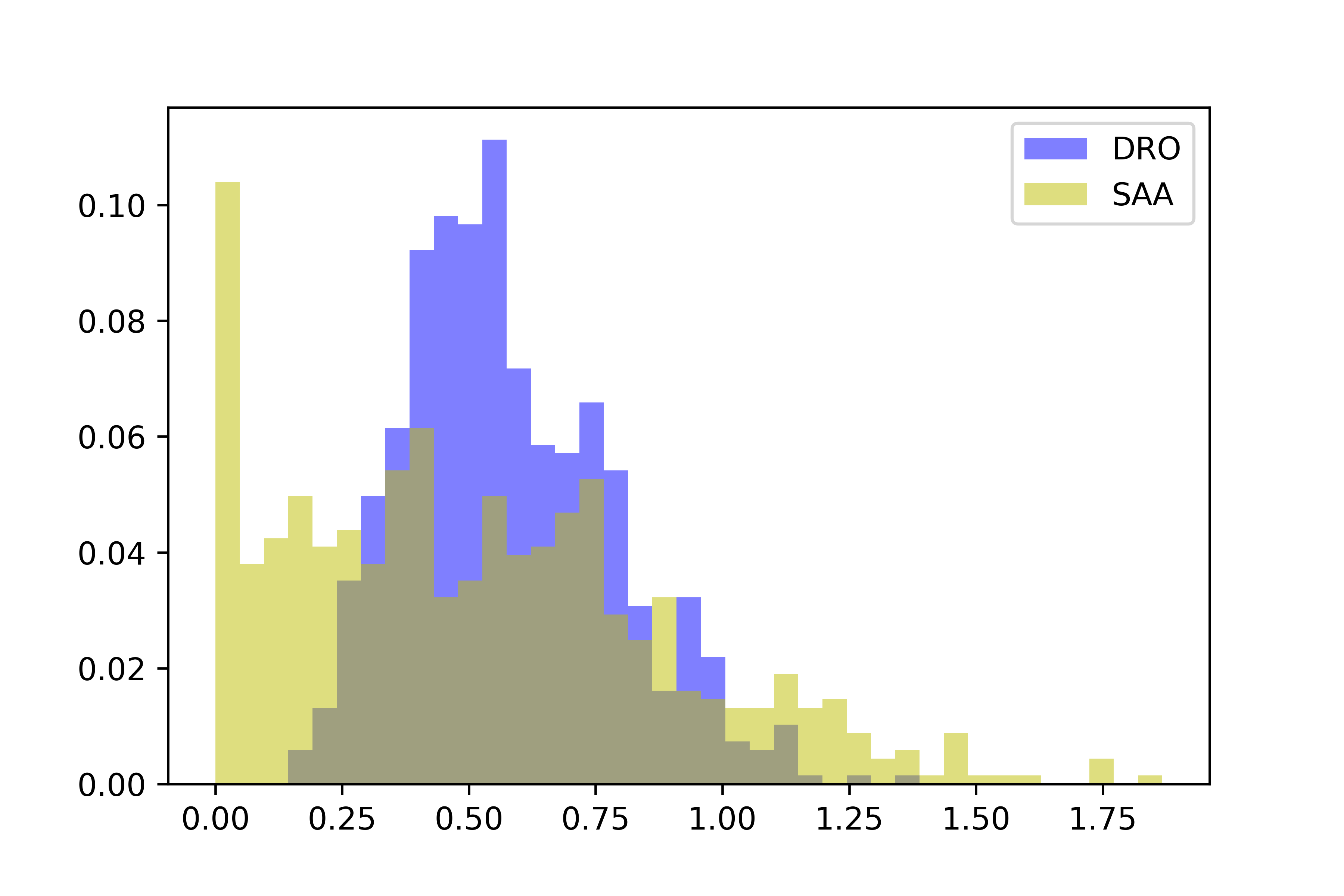}
	\caption{Comparison of DRO and SAA, synthetic data. All data are normalized by noise radius $R$. Noise radius is $16$. Sample size is $8$. Problem dimension is $5$.}
	\label{fig:linear_1}
\end{figure}

\begin{figure}

	\centering\includegraphics[width = 2.8in]{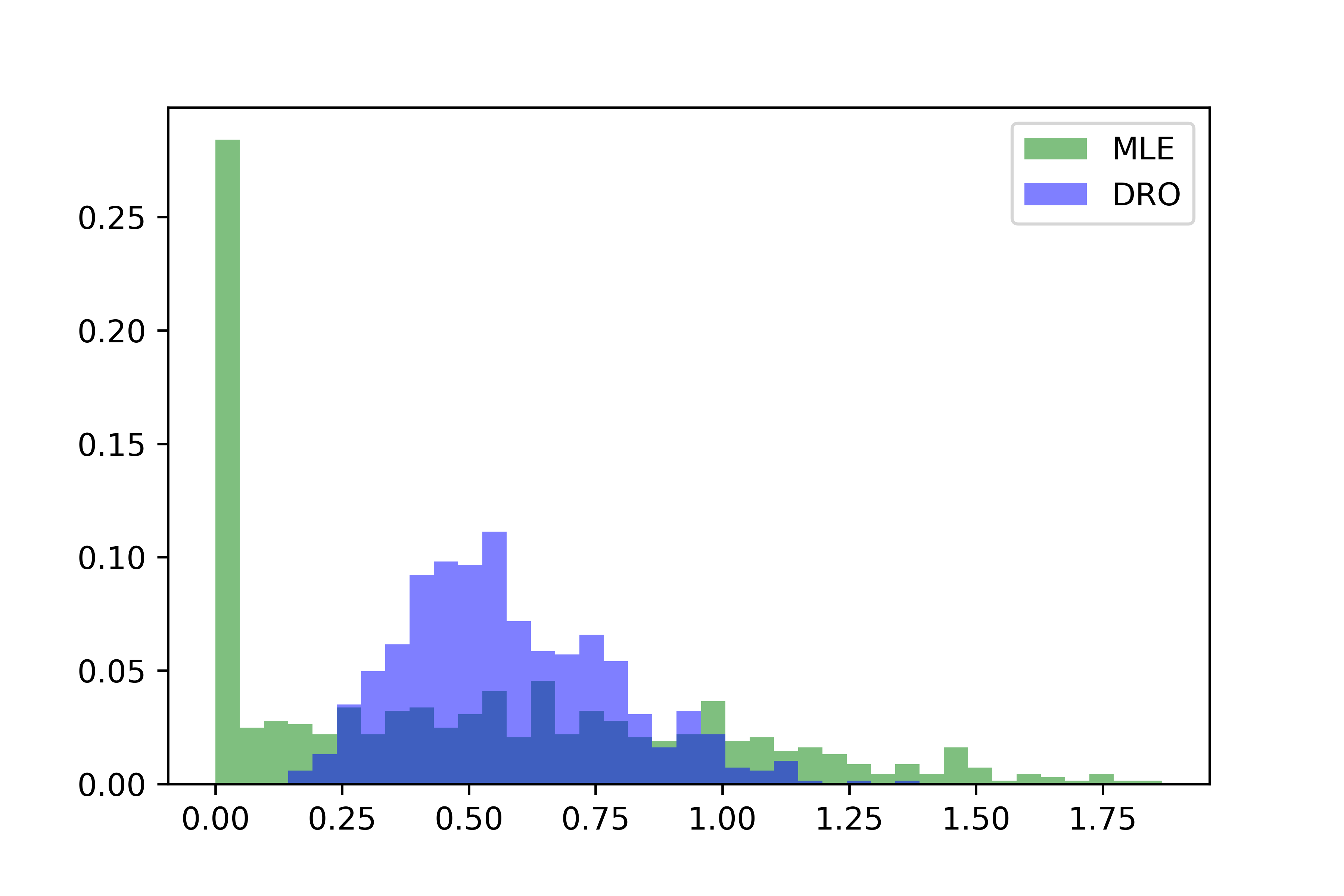}
	\caption{Comparison of DRO and MLE, synthetic data. All data are normalized by noise radius $R$. Noise radius is $16$. Sample size is $8$. Problem dimension is $5$.}
	\label{fig:linear_2}
\end{figure}


	\begin{table}
		\scriptsize
		\begin{tabular}{||c c c c ||}
			\hline
			Setup& Median & 75th Perc. & 90th Perc. \\
			\hline\hline		
8-4-5	&	       		\textbf{0.49}/0.59/0.52&	0.91/\textbf{0.74}/0.78 &	1.27/\textbf{0.92}/1.09\\	  

8-16-5&			\textbf{0.42}/0.55/0.47&	0.81/\textbf{0.72}/0.75 &		1.14/\textbf{0.86}/1.05 \\	

16-4-5&			\textbf{0.34}/0.57/0.41&	0.72/0.72/\textbf{0.64} &		1.08/\textbf{0.86}/\textbf{0.86} \\	

16-16-5&			\textbf{0.38}/0.57/0.43&	0.76/0.70/\textbf{0.68} &		1.03/\textbf{0.82}/0.92 \\	

16-4-8&			\textbf{0.43}/0.53/0.48&	0.75/\textbf{0.66}/\textbf{0.66} &		0.99/\textbf{0.76}/0.80 \\	

16-16-8&			\textbf{0.43}/0.54/0.46&	0.73/\textbf{0.65}/0.68 &		1.00/\textbf{0.77}/0.83 \\	
			\hline
		\end{tabular}
		\caption{Comparison of MLE, SAA, and DRO for several standard percentiles, synthetic data. Setup is "sample size-($n-1$)-noise radius-($R$)-dimension($d$)." Different statistics are normalized by noise radius $R$ and given by "MLE/DRO/SAA." Lowest regret among methods is boldfaced.}
		\label{fig:linear_exp}
	\end{table}

\subsection{Real Data}
	We perform our real data analysis with the asset price of nine tech companies from the S\&P 500 (INTC, AMZN, FB, MSFT, GOOGL, IBM, ORCL, ADBE, AAPL) from January  2013 to January 2018 with our model on the log price $y_t = \log(p_{i})$ at the end of each day and the objective function the approximated return $\sum_{i=1}^d x_i (\frac{p_{i+1}}{p_i}-1) \approx \langle x, y_T - y_{T-1}\rangle$ (where one uses the approximation $e^r\approx 1+r$, which is very accurate in the range of successive daily price ratios)~\cite{ruppert2011statistics}. We again compare the DRO and SAA models. In addition, we  run the algorithm assuming independence between the samples (which we call "Independent DRO"). For each day, the algorithms are allowed to use data from the previous $15$ days. The parameters are chosen by experimenting on the first three months of the dataset with, in reference to Theorem \ref{t:main1} and \S \ref{radius}, $\delta = 0.05, 0.1$, and $R =  1\%, 4\% , 10\%$ and selecting the combination with the best accumulated return. 
	The results are shown in Table \ref{table:real} for some quantiles of actual returns if we invest $10,000$ dollars each day, $10000 \sum_{i=1}^d x_i (\frac{p_{i+1}}{p_i}-1) $, where $x$ is, in turn, the solution for the four approaches. We see that both robust methods have a significantly lighter tail than does either the SAA or MLE approach, that our AR-based DRO on performs better than the independent DRO (except only slightly for the median), and that ignoring the uncertainty results in a significant degradation (MLE). 

\begin{table}
	\footnotesize
	\begin{tabular}{|| c c c c c||}
	\hline
		&Mean & Median & 25th Perc. & 10th Perc. \\
		\hline\hline
	DRO&0.822			&0.987			&\textbf{-3.888}		&\textbf{-9.758}\\
	SAA&0.955			&\textbf{1.032}		&-3.896		&-10.013\\
	MLE& \textbf{1.433}		&0.733			&-6.032			&-13.704\\
	Independent &0.819		&0.988			&-3.889			&-9.891\\
	DRO &&&&\\	
	\hline
	\end{tabular}
	\caption{Comparison of statistics of daily return for real stock data. \label{table:real} 
	}
\end{table}
\section*{Acknowledgment}
This material was based upon work
supported by the U.S. Department of Energy, Office of Science,
Office of Advanced Scientific Computing Research (ASCR) under
Contract DE-AC02-06CH11347 and by NSF
through award CNS-1545046. An initial version of this work was issued as 
 Preprint ANL/MCS-P9163-0219, Argonne National Laboratory. 




\bibliographystyle{model1-num-names}
\bibliography{sample.bib}






\vspace{-0.15cm}
\begin{flushright}
	\scriptsize \framebox{\parbox{2.5in}{Government License: The
			submitted manuscript has been created by UChicago Argonne,
			LLC, Operator of Argonne National Laboratory (``Argonne").
			Argonne, a U.S. Department of Energy Office of Science
			laboratory, is operated under Contract
			No. DE-AC02-06CH11357.  The U.S. Government retains for
			itself, and others acting on its behalf, a paid-up
			nonexclusive, irrevocable worldwide license in said
			article to reproduce, prepare derivative works, distribute
			copies to the public, and perform publicly and display
			publicly, by or on behalf of the Government. The Department of Energy will provide public access to these results of federally sponsored research in accordance with the DOE Public Access Plan. http://energy.gov/downloads/doe-public-access-plan. }}
	\normalsize
\end{flushright}
\end{document}